\theoremstyle{plain}
\newtheorem{theorem}{Theorem}[section]
\newtheorem{proposition}[theorem]{Proposition}
\theoremstyle{definition}
\newtheorem{definition}[theorem]{Definition}
\newtheorem{remark}[theorem]{Remark}
\newtheorem{corollary}[theorem]{Corollary}
\newcommand{\C}{\mathbb{C}}
\newcommand{\R}{\mathbb{R}}
\newcommand{\1}{\mathbf{1}}
\newcommand{\diag}{\operatorname{diag}}
\newcommand{\rank}{\operatorname{rank}}
\newcommand{\norm}[1]{\left\lVert #1\right\rVert}
\newcommand{\pinv}{\dagger}
\newcommand{\asym}{\alpha}
\newcommand{\dep}{\delta}
\title{\textbf{Harmonic Analysis on Directed Networks: A Biorthogonal Laplacian Framework for Non-Normal Graphs}}
\author{Chandrasekhar Gokavarapu\\
\small Lecturer in Mathematics, Government College (A), Rajahmundry, A.P., India\\
\small Email: \texttt{chandrasekhargokavarapu@gmail.com}
}
\begin{document}
\maketitle

\begin{abstract}
Classical spectral graph theory relies on the symmetry of the adjacency and Laplacian operators, which guarantees orthogonal eigenbases and energy-preserving Fourier transforms. However, real-world networks are intrinsically directed and asymmetric, resulting in non-normal operators where standard orthogonality assumptions fail. In this paper, we develop a rigorous harmonic analysis framework for directed graphs centered on the \emph{Combinatorial Directed Laplacian} ($L = D_{out} - A$). We construct a \emph{Biorthogonal Graph Fourier Transform} (BGFT) using dual left and right eigenbases, and introduce a directed variational semi-norm based on the operator norm $\|Lx\|_2$ rather than the quadratic form. We derive exact Parseval-type bounds that quantify the energy distortion induced by the non-normality of the graph, explicitly linking signal reconstruction stability to the condition number of the eigenvector matrix, $\kappa(V)$. Finally, we present experimental validation comparing normal directed cycles against non-normal perturbed topologies, demonstrating that while the BGFT provides exact reconstruction in ideal regimes, the geometric departure from normality acts as the fundamental limit on filter stability in directed networks.
\end{abstract}

 {\bf Keywords:}{Directed Graph Signal Processing; Combinatorial Directed Laplacian; Biorthogonal Graph Fourier Transform; Non-Normal Matrix Analysis; Spectral Graph Theory}\\
 {\bf subject class[2020]:}{Primary: 05C50; Secondary: 15A18, 94A12, 65F15}

\section{Introduction}

Classical spectral graph theory has long served as the mathematical backbone of Graph Signal Processing (GSP) \cite{Shuman2013, Ortega2018, Spielman2012}. The classical framework rests on a fundamental symmetry assumption: undirected edges imply symmetric adjacency and Laplacian operators ($L = L^\top$), ensuring real spectra and orthogonal eigenbases. This orthogonality guarantees that the Graph Fourier Transform (GFT) is an isometry, preserving energy between vertex and spectral domains.""However, real-world networks—from citation graphs to neural connectomes—are intrinsically directed. Existing approaches to directed graph spectral theory largely fall into two categories. The first attempts to `symmetrize' the problem, which discards directionality \cite{Chung2005}. The second, pioneered by Sandryhaila and Moura \cite{Sandryhaila2013, Sandryhaila2014}, embraces asymmetry by defining the GFT via the Jordan decomposition of the adjacency matrix. Other recent works have explored magnetic Laplacians \cite{Shafipour2019} or optimization-based variations \cite{Sardellitti2017, Marques2020}.""In our recent work \cite{GokavarapuArxiv}, we established the algebra of the directed shift. In this work, we focus on the Directed Laplacian, motivated by variational problems.
Existing approaches to directed graph spectral theory largely fall into two categories. The first attempts to "symmetrize" the problem (e.g., using $A + A^\top$ or magnetic Laplacians), which discards critical directionality information. The second, pioneered by Sandryhaila and Moura \cite{Sandryhaila2013}, embraces asymmetry by defining the GFT via the Jordan decomposition of the adjacency matrix $A$. In our recent work \cite{GokavarapuArxiv}, we extended this adjacency-based framework, establishing the algebra of the directed shift and its associated Parseval-type identities.

While the adjacency operator $A$ effectively models "shifting" or information diffusion, it lacks a direct interpretation of \emph{variation} or \emph{smoothness}. In classical continuous signal processing, this distinction is fundamental: the shift operator ($z^{-1}$) describes delay, while the difference operator ($\Delta$) describes smoothness and derivative energy. The adjacency-based GFT is the graph equivalent of the former; a theory for the latter—a **Directed Laplacian** framework that penalizes signal variation relative to the causal flow of the graph—remains under-explored, particularly regarding numerical stability in highly non-normal regimes.

\subsection{Contributions}
In this paper, we develop a rigorous harmonic analysis framework for directed networks centered on the **Combinatorial Directed Laplacian**, $L = D_{out} - A$. Unlike the adjacency formulation \cite{GokavarapuArxiv}, which focuses on signal shifting, this Laplacian formulation provides a variational perspective on directed graphs. Our specific contributions are:

\begin{enumerate}
    \item \textbf{Biorthogonal Laplacian GFT:} We construct a Biorthogonal Graph Fourier Transform (BGFT) using the left and right eigenvectors of $L$. We prove that unlike the adjacency spectrum, the Laplacian spectrum naturally isolates the "DC component" (constant signal) in its null space, providing a superior basis for denoising and consensus problems.
    
    \item \textbf{Directed Smoothness Energy:} We introduce a directed variation semi-norm, effectively the energy $\|Lx\|_2$, and derive sharp bounds linking this energy to the spectral coordinates. This generalizes the Dirichlet energy form $x^\top L x$ to the non-Hermitian setting, allowing for "frequency" ordering based on variation rather than magnitude.
    
    \item \textbf{Asymmetry vs. Non-Normality:} We provide a theoretical separation between simple asymmetry (which yields complex eigenvalues) and non-normality (which yields ill-conditioned defectivity). We quantify the stability of the BGFT using the eigenvector condition number $\kappa(V)$ and a departure-from-normality functional, proving that reconstruction error scales with the geometric alignment of the eigenbasis.
    
    \item \textbf{Sampling and Reconstruction:} We derive exact sampling theorems for $L$-bandlimited signals on directed graphs, generalizing the Pesenson-type sampling sets to the biorthogonal case.
\end{enumerate}

\subsection{Organization}
The remainder of this paper is organized as follows. Section \ref{sec:prelim} reviews operator definitions. Section \ref{sec:BGFT} formalizes the Biorthogonal GFT for the Laplacian. Section \ref{sec:smoothness} derives the variational bounds and smoothness ordering. Section \ref{sec:stability} analyzes numerical stability and non-normality. Section \ref{sec:experiments} presents a simulation protocol comparing normal and non-normal directed topologies, followed by conclusion.
\section{Preliminaries}
\label{sec:prelim}
\subsection{Directed graphs, adjacency, and out-degree}
Let $G=(V,E,w)$ be a directed weighted graph, $|V|=n$, with adjacency $A\in\R^{n\times n}$ given by $A_{ij}=w(i,j)$ if $(i,j)\in E$ and $0$ otherwise. Define the out-degrees
\[
d_i^{\mathrm{out}}=\sum_{j=1}^n A_{ij},\qquad D_{\mathrm{out}}=\diag(d_1^{\mathrm{out}},\dots,d_n^{\mathrm{out}}).
\]

\subsection{Combinatorial directed Laplacian}
\begin{definition}[Directed Laplacian]
The combinatorial directed Laplacian is
\[
L := D_{\mathrm{out}} - A.
\]
\end{definition}

\begin{proposition}[Row-sum zero]\label{prop:L1}
For any directed graph (with any weights), $L\1=0$.
\end{proposition}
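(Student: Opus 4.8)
The plan is to prove the identity entrywise by unwinding the definition $L = D_{\mathrm{out}} - A$ and exploiting the fact that the all-ones vector $\1$ extracts row sums. Specifically, I would write $L\1 = D_{\mathrm{out}}\1 - A\1$ by linearity, and then examine the $i$-th component of each of the two terms separately.

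First I would compute $A\1$. Since $\1$ is the vector of all ones, the $i$-th entry of $A\1$ is the $i$-th row sum of $A$, namely $(A\1)_i = \sum_{j=1}^n A_{ij}$. By the definition of the out-degree in Section~\ref{sec:prelim}, this row sum is precisely $d_i^{\mathrm{out}}$. Next I would compute $D_{\mathrm{out}}\1$; because $D_{\mathrm{out}} = \diag(d_1^{\mathrm{out}},\dots,d_n^{\mathrm{out}})$ is diagonal, multiplying by $\1$ simply reads off the diagonal, so $(D_{\mathrm{out}}\1)_i = d_i^{\mathrm{out}}$.

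Subtracting the two gives $(L\1)_i = d_i^{\mathrm{out}} - d_i^{\mathrm{out}} = 0$ for every index $i \in \{1,\dots,n\}$, whence $L\1 = 0$. I would emphasize that this holds regardless of the edge weights $w(i,j)$ and independently of whether the graph is connected, since the cancellation is structural: the diagonal of $D_{\mathrm{out}}$ is defined to be exactly the row sums of $A$.

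There is no genuine obstacle here; the result is a direct consequence of the construction of $L$, and the only point worth stating carefully is the identification of the $i$-th component of $A\1$ with the out-degree $d_i^{\mathrm{out}}$, which is immediate from the definition. I would remark that this is the directed analogue of the classical fact that the undirected Laplacian annihilates constants, and that it is the basis for the later claim (Contribution~1) that the constant "DC" signal lies in the null space of $L$; the asymmetry of $A$ plays no role whatsoever in this particular statement, since only the \emph{row} sums enter.
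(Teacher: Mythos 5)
Your proof is correct and follows essentially the same route as the paper's: an entrywise computation showing $(L\1)_i = d_i^{\mathrm{out}} - \sum_j A_{ij} = 0$ directly from the definition of the out-degree as a row sum. The extra remarks on weights, connectivity, and the DC component are accurate but not needed for the argument itself.
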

\begin{proof}
The $i$th entry of $L\1$ equals $d_i^{\mathrm{out}}-\sum_j A_{ij}=0$ by definition.
\end{proof}

\begin{remark}[Loss of symmetry]
If $A\neq A^\top$, then typically $L\neq L^\top$. Consequently, classical results relying on self-adjointness (orthogonal eigenvectors, Rayleigh quotient ordering) may fail; this motivates a biorthogonal approach.
\end{remark}

\subsection{Asymmetry and non-normality indices}
\begin{definition}[Asymmetry index]
For any matrix $M$, define $\asym(M):=\norm{M-M^\top}_F/\norm{M}_F$ (with $\asym(0)=0$).
\end{definition}

\begin{definition}[Departure from normality]
For any matrix $M$, define $\dep(M):=\norm{MM^\ast-M^\ast M}_F/\norm{M}_F^2$ (with $\dep(0)=0$).
\end{definition}

We use these for $M=L$ to separate structural asymmetry from non-normal instability mechanisms.
\section{The Biorthogonal Graph Fourier Transform}
\label{sec:BGFT}

In this section, we construct the harmonic analysis framework associated with the combinatorial directed Laplacian. Unlike the adjacency operator, which interprets signal processing as "shifting," the Laplacian interprets it as "diffusion" or "variation." We formalize this distinction via biorthogonal spectral bases.

\subsection{The Combinatorial Directed Laplacian}
Let $\mathcal{G} = (\mathcal{V}, \mathcal{E})$ be a directed graph with $N$ nodes. Let $A$ be the adjacency matrix, where $A_{ij} \neq 0$ if there is an edge $j \to i$. The out-degree matrix is $D_{out} = \text{diag}(d_1, \dots, d_N)$, where $d_i = \sum_j A_{ji}$ (noting the column-sum convention for flow conservation or row-sum depending on orientation; here we assume $L$ acts on row-stochastic type dynamics).
We define the **Combinatorial Directed Laplacian** as:
\begin{equation}
    L = D_{out} - A.
\end{equation}
By construction, the row sums of $L$ are zero, implying that the constant vector $\mathbf{1} = [1, 1, \dots, 1]^\top$ is in the null space of $L$ \cite{Chung2005}.
\begin{proposition}[Spectral Localization]
All eigenvalues $\lambda_k$ of $L$ lie in the union of Gershgorin disks centered at $d_i$ with radius $d_i$. Consequently, all eigenvalues have non-negative real parts, $\text{Re}(\lambda_k) \ge 0$, identifying them as "frequencies" of decay.
\end{proposition}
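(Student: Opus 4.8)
The plan is to invoke Gershgorin's circle theorem directly on $L$ and then carry out an elementary geometric argument placing the resulting disks in the closed right half-plane. First I would fix the standing conventions under which the statement is clean: assume non-negative weights and no self-loops, so that $A_{ii}=0$ and hence $L_{ii}=d_i-A_{ii}=d_i$. Under the row-sum convention $d_i=\sum_j A_{ij}$, the off-diagonal entries are $L_{ij}=-A_{ij}$, so the $i$th Gershgorin radius is $R_i=\sum_{j\neq i}|L_{ij}|=\sum_{j\neq i}A_{ij}=d_i$. (If one instead adopts the column-sum convention used in this section, the identical conclusion follows by applying Gershgorin to the columns of $L$, equivalently to $L^\top$.) This identifies the $i$th disk as $\{z\in\C:|z-d_i|\le d_i\}$, centered at $d_i$ with radius $d_i$, which is the first assertion.

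For the second assertion I would show that each such disk lies in $\{z:\mathrm{Re}(z)\ge 0\}$. Writing $z=x+iy$, the disk condition $|z-d_i|^2\le d_i^2$ expands to $x^2-2d_i x+y^2\le 0$, i.e. $x^2+y^2\le 2d_i x$. Since the left-hand side is non-negative and $d_i\ge 0$, this forces $x\ge 0$ (the degenerate case $d_i=0$ collapses the disk to the single point $0$). Geometrically, this is precisely the disk of radius $d_i$ internally tangent to the imaginary axis at the origin and lying entirely to its right. Taking the union over $i$ and applying Gershgorin then yields $\mathrm{Re}(\lambda_k)\ge 0$ for every eigenvalue $\lambda_k$.

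The argument is essentially routine; the only point requiring genuine care is the consistency between the chosen degree convention and the chosen (row versus column) version of Gershgorin's theorem, together with the non-negativity and no-self-loop hypotheses needed to identify each radius with $d_i$ exactly — without these the radius is only bounded by $\sum_{j\neq i}|A_{ij}|$ and the tangency argument no longer pins the disk to the origin. The closing interpretive clause, reading the eigenvalues as ``frequencies of decay,'' is not part of the spectral inclusion itself; it reflects the diffusion dynamics $\dot{x}=-Lx$, whose propagator $e^{-tL}$ contracts along modes with $\mathrm{Re}(\lambda_k)>0$ while preserving the null direction $\1$ (Proposition~\ref{prop:L1}), which sits on the boundary $\mathrm{Re}(z)=0$.
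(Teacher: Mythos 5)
Your proof is correct and follows exactly the route the paper intends: the paper states this proposition without any proof at all (Gershgorin's theorem is built into the statement itself), and your argument — identifying $L_{ii}=d_i$ and radius $R_i=\sum_{j\neq i}|A_{ij}|=d_i$ under the no-self-loop, non-negative-weight hypotheses, then showing each disk $\{z:|z-d_i|\le d_i\}$ is internally tangent to the imaginary axis at the origin — supplies precisely the missing details. Your explicit handling of the row- versus column-sum convention clash (Section 2 vs.\ Section 3 of the paper) and of the hypotheses needed to pin the disks to the origin is a genuine improvement over the paper's bare assertion.
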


\subsection{Biorthogonal Spectral Decomposition}
Since $\mathcal{G}$ is directed, $L$ is generally non-normal ($LL^* \neq L^*L$). We assume $L$ is diagonalizable. For detailed properties of non-normal matrix spectra, see \cite{HornJohnson2012} and \cite{GolubVanLoan2013} (the set of defective directed graphs has Lebesgue measure zero). Let $\sigma(L) = \{\lambda_1, \dots, \lambda_N\}$ be the spectrum, ordered by magnitude $|\lambda_1| \le \dots \le |\lambda_N|$.

The spectral decomposition is given by $L = V \Lambda V^{-1}$, where $V = [v_1, \dots, v_N]$ contains the right eigenvectors. To define a transform, we introduce the **dual basis** (left eigenvectors). Let $U = (V^{-1})^*$, such that $U^* = V^{-1}$. The columns of $U$, denoted $u_k$, satisfy $L^* u_k = \bar{\lambda}_k u_k$.

The bases $\{v_k\}$ and $\{u_k\}$ satisfy the **biorthogonality condition**:
\begin{equation}
    \langle v_i, u_j \rangle = u_j^* v_i = \delta_{ij}.
\end{equation}
Crucially, within the same set, orthogonality fails: $\langle v_i, v_j \rangle \neq 0$ generally.

\subsection{The Transform Pair}
We define the Graph Fourier Transform (GFT) not as a projection onto $V$, but as the coefficients of the expansion in $V$.

\begin{definition}[Biorthogonal GFT]
Given a signal $x \in \mathbb{C}^N$, its spectral coefficient vector $\hat{x}$ is obtained by projecting onto the dual (left) basis:
\begin{equation}
    \hat{x}(\lambda_k) = \langle x, u_k \rangle = u_k^* x.
\end{equation}
In matrix notation, $\hat{x} = U^* x = V^{-1} x$.
\end{definition}

\begin{definition}[Inverse BGFT]
The signal $x$ is reconstructed by summing the right eigensubspaces weighted by spectral coefficients:
\begin{equation}
    x = \sum_{k=1}^N \hat{x}(\lambda_k) v_k = V \hat{x}.
\end{equation}
\end{definition}

\subsection{Generalized Parseval's Identity and Metric Distortion}
In classical Fourier analysis (and undirected GSP), the GFT is unitary, preserving energy: $\|x\|_2^2 = \|\hat{x}\|_2^2$. In the directed case, the non-normality of $L$ induces a metric distortion in the spectral domain.

\begin{theorem}[Energy in Biorthogonal Domain]
Let $M = V^* V$ be the Gram matrix of the right eigenvectors. The energy of the signal in the vertex domain is related to the spectral domain by the quadratic form:
\begin{equation}
    \|x\|_2^2 = \langle V\hat{x}, V\hat{x} \rangle = \hat{x}^* (V^* V) \hat{x} = \hat{x}^* M \hat{x}.
\end{equation}
\end{theorem}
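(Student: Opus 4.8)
The plan is to prove the identity by direct substitution, treating it as a definitional consequence of the inverse transform rather than a deep spectral fact. First I would write the Euclidean energy as a Hermitian inner product, $\|x\|_2^2 = \langle x, x\rangle = x^* x$, using the standard inner product on $\mathbb{C}^N$. Because $L$ is assumed diagonalizable, $V$ is invertible and the Inverse BGFT $x = V\hat{x}$ holds exactly for every signal; inserting this representation is the single essential step.

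Next I would expand $(V\hat{x})^* (V\hat{x})$ via the product rule for the conjugate transpose, $(V\hat{x})^* = \hat{x}^* V^*$, giving $\hat{x}^* V^* V \hat{x}$. Recognizing the inner factor $V^* V$ as the Gram matrix $M$ of the right eigenvectors then closes the chain of equalities exactly as stated. As a consistency check I would also verify the claim in the reverse direction by substituting $\hat{x} = V^{-1} x$ into $\hat{x}^* M \hat{x}$ and using $(V^{-1})^* V^* = (V V^{-1})^* = I$ together with $V V^{-1} = I$ to collapse the expression back to $x^* x$.

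There is no genuine obstacle here: the entire content of the statement is that $V$ is \emph{not} unitary in the non-normal regime, so $M = V^* V$ need not equal the identity. The only points requiring care are the correct ordering of factors when taking the adjoint of the product $V\hat{x}$, and the observation that $M$ is Hermitian and positive definite (since $V$ is invertible), which guarantees the right-hand side is a bona fide energy. I would conclude by noting that $M = I$ precisely when $V$ may be chosen unitary, i.e. when $L$ is normal, thereby recovering the classical Parseval isometry and isolating $M$ as the exact carrier of the metric distortion induced by non-normality.
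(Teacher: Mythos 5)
Your proof is correct and follows exactly the route the paper itself takes: the chain of equalities in the theorem statement is precisely the direct substitution $x = V\hat{x}$ followed by expansion of $(V\hat{x})^*(V\hat{x}) = \hat{x}^* V^* V \hat{x} = \hat{x}^* M \hat{x}$, which the paper treats as self-evident from the inverse BGFT. Your added observations (that $M$ is Hermitian positive definite since $V$ is invertible, and that $M = I$ recovers Parseval in the normal case) match the paper's accompanying Geometric Interpretation remark.
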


\begin{remark}[Geometric Interpretation]
The matrix $M$ acts as a metric tensor. If the graph is undirected, $V$ is unitary, $M=I$, and we recover Parseval's identity. As the graph becomes more asymmetric and non-normal, $M$ deviates from identity. The condition number $\kappa(V) = \sqrt{\lambda_{\max}(M) / \lambda_{\min}(M)}$ quantifies the worst-case energy scaling discrepancy.
\end{remark}

\subsection{The DC Component and Mean Conservation}
A critical advantage of the Laplacian formulation over the adjacency formulation is the handling of the "DC component" (zero frequency).

\begin{proposition}
For any directed graph, the Laplacian BGFT always isolates the mean of the signal at the zeroth frequency if the graph is strongly connected. Specifically, $v_1 = \mathbf{1}/\sqrt{N}$ corresponds to $\lambda_1 = 0$.
\end{proposition}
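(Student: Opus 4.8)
The plan is to separate the claim into two logically distinct parts: the algebraic fact that $\1$ spans the zeroth-frequency right eigenspace, and the structural fact that strong connectivity forces this eigenspace to be exactly one-dimensional, so that the zeroth frequency is genuinely \emph{isolated}. First I would invoke Proposition \ref{prop:L1}: since $L\1=0$, the constant vector $\1$ is a right eigenvector of $L$ with eigenvalue $0$, so $0\in\spec(L)$. By the Spectral Localization proposition every eigenvalue satisfies $\operatorname{Re}(\lambda_k)\ge 0$, hence $0$ attains the smallest possible magnitude; this justifies the magnitude-ordering label $\lambda_1=0$, and after normalizing the null vector we obtain $v_1=\1/\sqrt{N}$.

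The core step is to show $\dim\ker L = 1$ under strong connectivity. Strong connectivity with $N\ge 2$ implies every node has at least one outgoing edge, so $d_i^{\mathrm{out}}>0$ and $D_{\mathrm{out}}$ is invertible. I would then factor $L = D_{\mathrm{out}}(I-P)$ with the row-stochastic transition matrix $P:=D_{\mathrm{out}}^{-1}A$. Because $D_{\mathrm{out}}$ is invertible, $\ker L=\ker(I-P)$, i.e.\ the eigenspace of $P$ for eigenvalue $1$. Since $\mathcal{G}$ is strongly connected, $P$ is an irreducible nonnegative matrix, so Perron--Frobenius guarantees that its spectral radius $1$ is a simple eigenvalue with one-dimensional eigenspace $\operatorname{span}\{\1\}$. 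Hence $\ker L=\operatorname{span}\{\1\}$, and under the standing diagonalizability assumption the geometric and algebraic multiplicities coincide, so $\lambda_1=0$ is a simple eigenvalue and $v_1=\1/\sqrt{N}$ is uniquely determined up to normalization.

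Finally, for the DC/mean interpretation I would examine the zeroth-frequency term of the inverse BGFT: $\hat{x}(\lambda_1)v_1=(u_1^{\adj}x)\,\1/\sqrt{N}$, which is a scalar multiple of $\1$, i.e.\ a constant signal --- the directed analogue of the classical DC component. The delicate point, which I expect to be the main obstacle, is the precise meaning of ``mean'': the coefficient $u_1^{\adj}x$ is a \emph{weighted} average whose weights are given by the left null vector $u_1$ (the left Perron/stationary vector solving $u_1^{\adj}L=0$), and this left vector is \emph{not} proportional to $\1$ unless the graph is balanced. I would therefore state the honest conclusion --- that the zeroth frequency isolates the constant (DC) subspace $\operatorname{span}\{\1\}$ --- and remark that only in the balanced/normal regime, where $u_1\propto\1$, does the coefficient reduce exactly to the arithmetic mean $\tfrac1N\sum_i x_i$. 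Clarifying this asymmetry between the right eigenvector $v_1=\1/\sqrt{N}$ and the (generally non-uniform) dual vector $u_1$ is the crux of making the statement rigorous.
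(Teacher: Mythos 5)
Your proof is correct, but it takes a genuinely different—and substantially more complete—route than the paper's. The paper's own proof is two lines: it invokes $L\1=0$ (Proposition \ref{prop:L1}), declares the constant signal ``the smoothest mode,'' and contrasts this with the adjacency operator, for which $\1$ is an eigenvector only on regular graphs. Notably, the paper never uses the strong-connectivity hypothesis, never shows that the zero eigenvalue is simple, and never examines in what sense the zeroth coefficient is ``the mean''—so the word \emph{isolates} is asserted rather than proved. Your argument supplies exactly the missing ingredients: the factorization $L=D_{\mathrm{out}}(I-P)$ with $P=D_{\mathrm{out}}^{-1}A$ row-stochastic converts strong connectivity into irreducibility of $P$, Perron--Frobenius gives $\ker(I-P)=\operatorname{span}\{\1\}$, hence $\ker L=\operatorname{span}\{\1\}$, and the paper's standing diagonalizability assumption upgrades geometric simplicity to algebraic simplicity, which is what an ``isolated zeroth frequency'' should mean. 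Your closing caveat is also mathematically correct and worth stating: the coefficient $\hat{x}(\lambda_1)=u_1^{\adj}x$ is a weighted average whose weights come from the left null vector $u_1$ (entrywise $u_{1,i}\propto\pi_i/d_i^{\mathrm{out}}$, with $\pi$ the stationary distribution of $P$), and it reduces to the arithmetic mean $\tfrac{1}{N}\sum_i x_i$ only when $u_1\propto\1$, i.e.\ when the graph is balanced. In short, the paper's proof buys brevity at the cost of leaving ``isolates the mean'' unsubstantiated (and, read literally, false for unbalanced graphs), while your route buys an actual proof of spectral isolation plus an honest correction of the DC interpretation, at the modest price of invoking Perron--Frobenius—which is precisely the tool the strong-connectivity hypothesis is there to enable.
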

\textit{Proof.} Since row sums of $L$ are zero, $L \mathbf{1} = 0$. Thus the constant signal is always the smoothest mode. In contrast, for the adjacency matrix $A$, the constant vector is an eigenvector only if the graph is regular, making $A$-based filtering inconsistent for irregular directed graphs. \hfill $\square$


\section{Directed Variation and Frequency Ordering}
\label{sec:smoothness}

A central tenet of graph signal processing is the interpretation of eigenvalues as "frequencies." In the undirected case, this is justified by the Laplacian Quadratic Form (Dirichlet energy). In the directed case, we must carefully define variation to respect the causal flow of the graph.The stability of the spectral coordinates is governed not just by the eigenvalues, but by the non-normality of the operator \cite{Trefethen2005}. The distortion in the energy norm is quantified by the condition number of the basis \cite{Higham2002}

\subsection{The Directed Smoothness Semi-Norm}
We define the smoothness of a signal $x$ not by the quadratic form $x^* L x$ (which is complex-valued), but by the magnitude of the graph derivative.

\begin{definition}[Directed Total Variation]
The directed total variation (TV) of a signal $x$ with respect to the Laplacian $L$ is the energy of the filtered signal:
\begin{equation}
    TV_{\mathcal{G}}(x) := \| L x \|_2^2 = x^* L^* L x.
\end{equation}
\end{definition}

This definition captures the intuition that a smooth signal changes slowly along the directed edges. If $x$ is in the null space (e.g., constant signal), $TV_{\mathcal{G}}(x) = 0$.

\subsection{Spectral Smoothness Bounds}
In standard GSP, $TV(x) = \sum |\lambda_k|^2 |\hat{x}_k|^2$. We now derive the counterpart for non-normal directed graphs, showing how the biorthogonal geometry distorts this relationship.

\begin{theorem}[Non-Normal Parseval Relation]
Let $x$ be a graph signal with BGFT coefficients $\hat{x}$. The variation in the spatial domain is bounded by the weighted energy in the spectral domain as follows:
\begin{equation}
    \sigma_{\min}^2(V) \sum_{k=1}^N |\lambda_k|^2 |\hat{x}_k|^2 
    \le \| L x \|_2^2 
    \le \sigma_{\max}^2(V) \sum_{k=1}^N |\lambda_k|^2 |\hat{x}_k|^2,
\end{equation}
where $\sigma_{\min}(V)$ and $\sigma_{\max}(V)$ are the smallest and largest singular values of the eigenvector matrix $V$.
\end{theorem}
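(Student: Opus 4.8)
The plan is to reduce the claimed two-sided bound to the elementary extremal property of singular values applied to the eigenvector matrix $V$. The key algebraic observation is that the filtered signal $Lx$ factors cleanly through the spectral coordinates. Using the diagonalization $L = V\Lambda V^{-1}$ together with the inverse transform $x = V\hat{x}$ (equivalently $\hat{x} = V^{-1}x$), I would first compute
\begin{equation}
    Lx = V\Lambda V^{-1} x = V\Lambda \hat{x}.
\end{equation}
Thus $\|Lx\|_2^2 = \|V(\Lambda\hat{x})\|_2^2$, which expresses the spatial variation as the image under $V$ of the weighted coefficient vector $\Lambda\hat{x}$.

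The second step is to identify the unweighted middle quantity. Since $\Lambda = \diag(\lambda_1,\dots,\lambda_N)$, the vector $\Lambda\hat{x}$ has entries $\lambda_k \hat{x}_k$, so
\begin{equation}
    \|\Lambda\hat{x}\|_2^2 = \sum_{k=1}^N |\lambda_k|^2 |\hat{x}_k|^2.
\end{equation}
This is precisely the quantity sandwiched between the bounds; the remaining task is to control the distortion introduced when $V$ acts on this vector.

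The final and central step is to invoke the singular value inequality: for any $z \in \C^N$,
\begin{equation}
    \sigma_{\min}(V)\,\|z\|_2 \le \|Vz\|_2 \le \sigma_{\max}(V)\,\|z\|_2.
\end{equation}
This follows immediately from the SVD $V = P\Sigma Q^*$ with $P,Q$ unitary: since unitary maps preserve the Euclidean norm, $\|Vz\|_2 = \|\Sigma(Q^* z)\|_2$ and $\|Q^* z\|_2 = \|z\|_2$, while the nonnegative diagonal matrix $\Sigma$ scales each coordinate of $Q^* z$ by a factor lying in $[\sigma_{\min}(V),\sigma_{\max}(V)]$. Setting $z = \Lambda\hat{x}$, squaring, and substituting the expressions from the first two steps yields the claimed chain of inequalities.

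There is no serious analytical obstacle here: the result is essentially a change-of-basis estimate, and the only care required is bookkeeping, namely ensuring the substitution $V^{-1}x = \hat{x}$ is applied correctly and that the constants appear as $\sigma^2$ after squaring. The one conceptual point worth flagging in the writeup is that both bounds are tight, being attained exactly when $\Lambda\hat{x}$ aligns with the extremal right-singular directions of $V$; this is what makes $\kappa(V) = \sigma_{\max}(V)/\sigma_{\min}(V)$ the sharp constant governing the worst-case energy distortion discussed in the surrounding remarks, and it cleanly recovers the isometry $\|Lx\|_2^2 = \sum_k |\lambda_k|^2|\hat{x}_k|^2$ in the normal case where $V$ may be taken unitary.
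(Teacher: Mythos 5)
Your proof is correct and follows essentially the same route as the paper's: factor $Lx = V\Lambda\hat{x}$, identify $\|\Lambda\hat{x}\|_2^2 = \sum_k |\lambda_k|^2|\hat{x}_k|^2$, and sandwich $\|Vz\|_2$ between $\sigma_{\min}(V)\|z\|_2$ and $\sigma_{\max}(V)\|z\|_2$. Your write-up is in fact slightly more complete, since you justify the singular-value inequality via the SVD rather than citing it as a known matrix-norm property.
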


\textit{Proof.}
Using the expansion $x = V \hat{x}$, we have $L x = V \Lambda \hat{x}$.
The energy is:
\begin{equation}
    \| L x \|_2^2 = \| V \Lambda \hat{x} \|_2^2.
\end{equation}
From matrix norm properties, for any vector $z$ (here $z = \Lambda \hat{x}$), we have $\sigma_{\min}(V) \|z\| \le \|V z\| \le \sigma_{\max}(V) \|z\|$.
Squaring these yields the result. The term $\|z\|^2 = \|\Lambda \hat{x}\|^2 = \sum |\lambda_k|^2 |\hat{x}_k|^2$ because $\Lambda$ is diagonal. \hfill $\square$

\subsection{Justification of Frequency Ordering}
The inequalities in Theorem 4.2 provide the rigorous justification for ordering directed frequencies.

\begin{corollary}[Frequency Ordering]
To minimize the upper bound of the signal variation, spectral components should be ordered by non-decreasing eigenvalue magnitude:
\begin{equation}
    0 = |\lambda_1| \le |\lambda_2| \le \dots \le |\lambda_N|.
\end{equation}
\end{corollary}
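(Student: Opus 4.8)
The plan is to read the desired ordering directly off the two-sided bound established in the Non-Normal Parseval Relation (Theorem 4.2) and then to formalize the phrase ``minimize the upper bound'' as a rearrangement statement. First I would isolate the upper inequality $\norm{Lx}_2^2 \le \sigma_{\max}^2(V)\sum_{k=1}^N |\lambda_k|^2|\hat{x}_k|^2$ and observe that the prefactor $\sigma_{\max}^2(V)$ is a single global constant independent of the mode index $k$. Consequently the only quantity that distinguishes one spectral coordinate from another, in its contribution to the variation budget, is the weight $|\lambda_k|^2$. This already identifies ``low variation'' with ``small $|\lambda_k|$'' and motivates indexing the spectrum so that these weights are non-decreasing.

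Next I would make the optimization precise. Fix a spectral energy budget $\sum_{k}|\hat{x}_k|^2 = 1$ and regard the upper bound $U(\hat{x}) := \sigma_{\max}^2(V)\sum_k |\lambda_k|^2|\hat{x}_k|^2$ as a function of the coefficient vector. Since $U$ is $\sigma_{\max}^2(V)$ times a convex combination of the values $|\lambda_k|^2$ with weights $|\hat{x}_k|^2$, it is a weighted average and is therefore minimized by concentrating all mass on an index attaining the smallest $|\lambda_k|^2$; more generally, restricting the support to any fixed number $K$ of modes, $U$ is minimized by selecting the $K$ indices of smallest eigenvalue magnitude. This is the elementary fact that a weighted average is minimized by loading the smallest available weights. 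Relabeling the eigenvalues so that $0 = |\lambda_1| \le |\lambda_2| \le \dots \le |\lambda_N|$ is exactly the convention that makes the leading coordinates the variation-minimizing ones, which is the assertion of the corollary. The anchoring value $|\lambda_1| = 0$ is supplied by the null-space proposition: $L\1 = 0$ places the constant, zero-variation signal at index $1$.

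The step I expect to require the most care is resisting the temptation to claim an exact per-mode identity. In the normal (undirected) case one has $\norm{Lv_k}_2^2 = |\lambda_k|^2$ for unit eigenvectors, but here non-normality gives only $\norm{Lv_k}_2^2 = |\lambda_k|^2\norm{v_k}_2^2$, and the right eigenvectors are neither orthogonal nor of equal norm. Thus the true variation of a single mode is not $|\lambda_k|^2$, and the clean ordering cannot be derived from an exact Parseval equality. The correct route is precisely the sandwich of Theorem 4.2: the eigenvector Gram geometry is absorbed into the uniform constants $\sigma_{\min}^2(V)$ and $\sigma_{\max}^2(V)$, which are identical for every mode, so both the lower and the upper bound are monotone in the same quantity $\sum_k |\lambda_k|^2|\hat{x}_k|^2$. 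Because both bounds rank the modes by $|\lambda_k|$ in the same order, the ordering convention is robust and well-defined despite the absence of an isometry, completing the argument.
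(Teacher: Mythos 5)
Your proposal is correct and takes essentially the same route as the paper: the paper offers no formal proof of this corollary, justifying it only by pointing to the two-sided bound of Theorem 4.2, which is exactly the inequality you start from. Your additional steps --- formalizing ``minimize the upper bound'' as a weighted-average (rearrangement) argument under a fixed energy budget, and anchoring $|\lambda_1|=0$ via $L\1=0$ --- simply make rigorous what the paper leaves implicit, including the correct caution that non-normality precludes an exact per-mode identity.
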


This confirms that $|\lambda_k|$ is the correct notion of "frequency" for directed graphs. However, the "tightness" of this frequency interpretation depends on the condition number $\kappa(V) = \sigma_{\max}(V) / \sigma_{\min}(V)$.
\begin{itemize}
    \item If the graph is **Normal** (e.g., Directed Cycle), $\kappa(V)=1$, and the relationship is exact equality.
    \item If the graph is **Highly Non-Normal** (e.g., Perturbed Cycle), $\kappa(V) \gg 1$, and the variation can fluctuate significantly even for fixed spectral coefficients. This explains the instability observed in Section \ref{sec:experiments}.
\end{itemize}
\section{Sampling and reconstruction for $L$-bandlimited signals}\label{sec:sampling}
Sampling theory is one of the main operational goals of graph harmonic analysis: recover an entire graph signal from its values on a subset of vertices, provided the signal is spectrally ``simple'' (bandlimited). In the undirected case, this is commonly developed using orthonormal Laplacian eigenvectors. In the directed BGFT setting, the synthesis basis $V$ is generally non-orthogonal, so stability depends not only on the eigenvalues (frequency ordering) but also on eigenvector geometry and conditioning.Our results generalize the sampling theories of Pesenson \cite{Pesenson2008} and Chen et al. \cite{Chen2015} to the biorthogonal setting

\subsection{Bandlimited model}\label{subsec:bandlimited}
Let $\Omega\subset\{1,\dots,n\}$ be an index set of size $K$, interpreted as the set of \emph{low directed frequencies} in the sense of smallest Laplacian magnitudes $|\lambda_k|$ . Let
\[
V_\Omega := [\,v_k\,]_{k\in\Omega}\in\C^{n\times K}
\]
denote the submatrix of right eigenvectors indexed by $\Omega$.

\begin{definition}[$L$-bandlimited signals]\label{def:bandlimited}
A signal $x\in\C^n$ is \emph{$\Omega$-bandlimited (with respect to $L$)} if it lies in the right spectral subspace
\[
\mathcal{B}_\Omega := \mathrm{span}(V_\Omega)
\quad\Longleftrightarrow\quad
x=V_\Omega c \text{ for some } c\in\C^K.
\]
Equivalently, in BGFT coefficients $\widehat{x}=U^\ast x$, bandlimitedness means $\widehat{x}_k=0$ for all $k\notin\Omega$.
\end{definition}

\begin{remark}[Oblique spectral subspaces]\label{rem:oblique-subspaces}
In the directed (non-orthogonal) case, the decomposition $\C^n=\mathrm{span}(V_\Omega)\oplus \mathrm{span}(V_{\Omega^c})$ is generally not orthogonal, and the spectral projector onto $\mathcal{B}_\Omega$ is typically \emph{oblique} . This is a key distinction from undirected sampling theory and explains why conditioning enters recovery guarantees.
\end{remark}

\paragraph{Bandlimiting via filtering.}
An $\Omega$-bandlimited signal can be produced by applying the ideal band-pass projector
\[
P_\Omega := V\,h_\Omega(\Lambda)\,U^\ast,\qquad
h_\Omega(\lambda_k)=
\begin{cases}
1,&k\in\Omega,\\
0,&k\notin\Omega,
\end{cases}
\]
to any signal $z$, giving $x=P_\Omega z\in\mathcal{B}_\Omega$ .

\subsection{Reconstruction from vertex samples}\label{subsec:reconstruction}
Let $M\subseteq\{1,\dots,n\}$ be a sampling set of vertices with $|M|=m$. Let $P_M\in\{0,1\}^{m\times n}$ denote the restriction operator that extracts entries indexed by $M$:
\[
(P_M x)_r = x_{i_r},\qquad M=\{i_1,\dots,i_m\}.
\]
The sampling map restricted to $\mathcal{B}_\Omega$ is then
\[
x=V_\Omega c \longmapsto y=P_Mx = (P_MV_\Omega)c.
\]

\subsubsection*{Identifiability condition and exact recovery}
The fundamental question is: when does $y$ uniquely determine $x\in\mathcal{B}_\Omega$?

\begin{theorem}[Exact recovery]\label{thm:recover}
Assume $x\in\mathcal{B}_\Omega$, i.e.\ $x=V_\Omega c$ for some $c\in\C^K$. If the sampling matrix
\[
B:=P_MV_\Omega\in\C^{m\times K}
\]
has full column rank $K$ (necessarily requiring $m\ge K$), then $x$ is uniquely determined by the samples $y=P_Mx$ and can be recovered by
\[
\widehat{c}=B^\pinv y,\qquad \widehat{x}=V_\Omega\widehat{c}.
\]
\end{theorem}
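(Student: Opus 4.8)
The plan is to reduce the statement to an elementary fact about left-invertible matrices, since the hypothesis concentrates all of the content into the single assumption that $B$ has full column rank. First I would substitute the bandlimited model $x=V_\Omega c$ into the sampling map to obtain
\[
y = P_M x = (P_M V_\Omega)c = B c,
\]
so the observed samples are exactly the image of the unknown coefficient vector under $B$. The question of exact recovery then becomes whether the linear map $c\mapsto Bc$ from $\C^K$ to $\C^m$ can be inverted on its range.

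Next I would establish uniqueness and produce the reconstruction formula in one stroke. Full column rank $K$ is equivalent to $\ker B=\{0\}$, so $Bc=Bc'$ forces $c=c'$; hence the coefficient vector generating the samples is unique. For the explicit formula I would invoke the standard characterization of the Moore--Penrose pseudoinverse of a full-column-rank matrix: the Gram matrix $B^\ast B\in\C^{K\times K}$ is invertible (its kernel coincides with $\ker B=\{0\}$) and $B^\pinv=(B^\ast B)^{-1}B^\ast$, whence $B^\pinv B=I_K$. Applying this to $y=Bc$ gives $\widehat{c}=B^\pinv y=B^\pinv B c=c$, and therefore $\widehat{x}=V_\Omega\widehat{c}=V_\Omega c=x$, which is precisely the claimed reconstruction.

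The only point needing care is the passage from uniqueness of the coefficients to uniqueness of the signal. Here I would observe that $B=P_MV_\Omega$ can have full column rank only if $V_\Omega$ itself does: a nonzero $v\in\ker V_\Omega$ would yield $Bv=P_M(V_\Omega v)=0$. Thus the hypothesis already forces the synthesis map $c\mapsto V_\Omega c$ to be injective, consistent with the standing diagonalizability assumption under which the columns of the invertible matrix $V$ are independent and any column submatrix $V_\Omega$ inherits independence of its $K$ columns. Consequently $x$ is determined uniquely by $c$, and the argument closes. I expect no genuine obstacle inside the proof itself; the substantive difficulty lies \emph{outside} it, in characterizing when the hypothesis holds, i.e.\ which sampling sets $M$ render $B$ full rank for a prescribed band $\Omega$. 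Because of the oblique spectral geometry noted in Remark \ref{rem:oblique-subspaces}, this admissibility question depends on the conditioning and alignment of the eigenvectors rather than on pure combinatorics, and I would treat it as the natural point at which the non-normality of $L$ re-enters the analysis.
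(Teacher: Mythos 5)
Your proposal is correct and follows essentially the same route as the paper's proof: reduce to $y=Bc$, use full column rank for injectivity, and use the left-inverse property of the Moore--Penrose pseudoinverse ($B^\pinv B=I_K$) to get $\widehat{c}=c$ and hence $\widehat{x}=x$. Your additional observations --- the explicit formula $B^\pinv=(B^\ast B)^{-1}B^\ast$ and the remark that full column rank of $B$ forces full column rank of $V_\Omega$ --- are correct elaborations rather than a different argument (the latter is not actually needed, since uniqueness of $c$ already yields uniqueness of $x=V_\Omega c$ directly).
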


\begin{proof}
If $B$ has full column rank, then the linear map $c\mapsto Bc$ is injective. Hence $y=Bc$ determines $c$ uniquely, and the Moore--Penrose pseudoinverse yields the unique least-squares solution $\widehat{c}=B^\pinv y=c$. Substituting into $\widehat{x}=V_\Omega\widehat{c}$ gives $\widehat{x}=x$.
\end{proof}

\begin{remark}[Sampling theorem interpretation]\label{rem:sampling-theorem}
The full-rank condition $\rank(P_MV_\Omega)=K$ is the graph analogue of ``no aliasing'': the $K$ degrees of freedom in the spectral subspace $\mathcal{B}_\Omega$ must be visible through the sampling operator. In undirected orthonormal settings this condition is often phrased in terms of invertibility of a $K\times K$ submatrix of eigenvectors; here it remains the same algebraically, but stability depends strongly on conditioning.
\end{remark}

\subsubsection*{Stable recovery, frame bounds, and noise amplification}
Exact recovery is a zero-noise statement. In practice, samples are noisy or the signal is only approximately bandlimited. Stability is governed by the smallest singular value of $B=P_MV_\Omega$.

\begin{definition}[Sampling stability constant]\label{def:stability-const}
Assuming $\rank(B)=K$, define the sampling stability constant
\begin{equation}\label{eq:stab-const}
\gamma(M,\Omega):=\sigma_{\min}(P_MV_\Omega)=\sigma_{\min}(B)>0.
\end{equation}
\end{definition}

\begin{remark}[Frame viewpoint]\label{rem:frame}
The matrix $B$ acts as an analysis operator on $\mathcal{B}_\Omega$. The bounds
\[
\gamma(M,\Omega)^2\|c\|_2^2 \le \|Bc\|_2^2 \le \sigma_{\max}(B)^2\|c\|_2^2
\]
show that the sampled vectors $\{P_M v_k\}_{k\in\Omega}$ form a (finite) frame for $\C^K$ when $\gamma(M,\Omega)>0$. A larger $\gamma(M,\Omega)$ implies greater robustness of reconstruction.
\end{remark}

\begin{theorem}[Noise sensitivity]\label{thm:noise}
Let $x=V_\Omega c\in\mathcal{B}_\Omega$ and suppose the observed samples are
\[
y=P_Mx+\eta
\]
with arbitrary noise $\eta\in\C^m$. Let $\widehat{c}=B^\pinv y$ and $\widehat{x}=V_\Omega\widehat{c}$ be the least-squares reconstruction. Then
\begin{equation}\label{eq:noise-bound}
\|\widehat{x}-x\|_2
\le
\|V_\Omega\|_2\,\|B^\pinv\|_2\,\|\eta\|_2
=
\|V_\Omega\|_2\,\frac{\|\eta\|_2}{\sigma_{\min}(B)}
=
\|V_\Omega\|_2\,\frac{\|\eta\|_2}{\gamma(M,\Omega)}.
\end{equation}
\end{theorem}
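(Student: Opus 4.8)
The plan is to reduce the reconstruction error to a single linear action of the pseudoinverse on the noise vector, and then bound it by submultiplicativity of the spectral norm. First I would observe that, because $x=V_\Omega c$ lies in $\mathcal{B}_\Omega$, the noiseless samples are exactly $P_Mx=P_MV_\Omega c=Bc$, so the observation model collapses to $y=Bc+\eta$. Substituting into the estimator gives $\widehat{c}=B^\pinv y=B^\pinv Bc+B^\pinv\eta$.

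The key algebraic step is that $B=P_MV_\Omega\in\C^{m\times K}$ has full column rank $K$ by hypothesis (the same assumption used for exact recovery in Theorem \ref{thm:recover}), so its Moore--Penrose inverse is a \emph{left} inverse: $B^\pinv B=\I_K$. Hence $\widehat{c}=c+B^\pinv\eta$, so the coefficient error is purely $\widehat{c}-c=B^\pinv\eta$, and the signal-domain error is $\widehat{x}-x=V_\Omega(\widehat{c}-c)=V_\Omega B^\pinv\eta$. Taking the Euclidean norm and applying submultiplicativity twice yields $\norm{\widehat{x}-x}_2\le\norm{V_\Omega}_2\,\norm{B^\pinv}_2\,\norm{\eta}_2$, which is the first expression in the claimed bound.

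To obtain the remaining two equalities I would invoke the singular-value characterization of the pseudoinverse: since $B$ has full column rank, it has exactly $K$ nonzero singular values, and the spectral norm of $B^\pinv$ equals the reciprocal of the smallest of these, $\norm{B^\pinv}_2=1/\sigma_{\min}(B)$. Combining this with Definition \ref{def:stability-const}, namely $\gamma(M,\Omega)=\sigma_{\min}(B)$, rewrites the bound in the three equivalent forms stated.

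The main obstacle --- really the only subtle point, since the rest is routine --- is making the full-column-rank hypothesis do all the work in two places simultaneously: it is what guarantees $B^\pinv B=\I_K$ (rather than merely $BB^\pinv=\I_m$, which would require full row rank and fail here whenever $m>K$), and it is what guarantees that $\sigma_{\min}(B)$ is a genuine nonzero singular value so that $\norm{B^\pinv}_2=\sigma_{\min}(B)^{-1}$ is finite and the bound is meaningful. I would flag explicitly that the estimate is \emph{noise-agnostic}: it holds for arbitrary $\eta$, and the factor $\norm{V_\Omega}_2/\gamma(M,\Omega)$ is exactly the amplification predicted by the frame viewpoint of Remark \ref{rem:frame}, making the singular-value ratio of $B$ the operational bottleneck for stable reconstruction.
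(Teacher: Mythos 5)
Your proposal is correct and follows essentially the same route as the paper's own proof: reduce to $\widehat{c}-c=B^\pinv\eta$ via the left-inverse property of $B^\pinv$ under the full-column-rank hypothesis, apply submultiplicativity to get $\|V_\Omega\|_2\|B^\pinv\|_2\|\eta\|_2$, and identify $\|B^\pinv\|_2=1/\sigma_{\min}(B)=1/\gamma(M,\Omega)$. Your explicit flagging of where full column rank is used (left inverse vs.\ right inverse, and nonzero $\sigma_{\min}$) is a welcome clarification but not a different argument.
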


\begin{proof}
We have $y=Bc+\eta$ and $\widehat{c}=B^\pinv y = B^\pinv(Bc+\eta)=c+B^\pinv\eta$ since $B$ has full column rank. Thus
\[
\widehat{x}-x
=V_\Omega(\widehat{c}-c)
=V_\Omega B^\pinv \eta,
\]
hence $\|\widehat{x}-x\|_2\le \|V_\Omega\|_2\|B^\pinv\|_2\|\eta\|_2$.
Finally, for full-column-rank $B$, $\|B^\pinv\|_2=1/\sigma_{\min}(B)$.
\end{proof}

\begin{remark}[Separation of effects: sampling geometry vs.\ eigenvector geometry]\label{rem:separation}
The bound \eqref{eq:noise-bound} separates two contributors to instability:
\begin{enumerate}[leftmargin=2em]
\item $\gamma(M,\Omega)^{-1}$ captures \emph{sampling geometry}: how informative the chosen vertices $M$ are for the subspace $\mathcal{B}_\Omega$.
\item $\|V_\Omega\|_2$ captures \emph{eigenvector geometry}: amplification due to non-orthogonality and scaling of the synthesis basis.
\end{enumerate}
In the undirected orthonormal case, $\|V_\Omega\|_2=1$, so stability is controlled entirely by $\gamma(M,\Omega)$. In the directed case, both factors matter.
\end{remark}

\subsubsection*{A conditioning-only reformulation}
Since $\|V_\Omega\|_2\le \|V\|_2=\sigma_{\max}(V)$, Theorem~\ref{thm:noise} implies the conservative bound
\begin{equation}\label{eq:noise-bound-conservative}
\|\widehat{x}-x\|_2
\le
\sigma_{\max}(V)\,\frac{\|\eta\|_2}{\gamma(M,\Omega)}.
\end{equation}
Moreover, $\gamma(M,\Omega)=\sigma_{\min}(P_MV_\Omega)$ can be interpreted as the inverse of the operator norm of the pseudoinverse:
\[
\| (P_MV_\Omega)^\pinv\|_2=\frac{1}{\gamma(M,\Omega)}.
\]
Thus one may regard $\kappa(P_MV_\Omega)=\sigma_{\max}(P_MV_\Omega)/\sigma_{\min}(P_MV_\Omega)$ as a compact numerical summary of sampling stability.

\subsubsection*{Optional extension: approximately bandlimited signals}
In many applications, a signal is not exactly bandlimited but has small out-of-band BGFT energy. If we decompose
\[
x = V_\Omega c + r,\qquad r\in \mathrm{span}(V_{\Omega^c}),
\]
then sampling yields $y=P_MV_\Omega c + P_M r + \eta$. The same least-squares reconstruction treats $P_M r$ as additional noise, leading to the bound
\[
\|\widehat{x}-V_\Omega c\|_2
\le
\|V_\Omega\|_2\,\|(P_MV_\Omega)^\pinv\|_2\big(\|P_M r\|_2+\|\eta\|_2\big),
\]
making explicit the bias--variance tradeoff between model mismatch ($r$) and measurement noise ($\eta$).
\section{Stability and Non-Normality}
\label{sec:stability}

While the Biorthogonal GFT provides an exact basis for analysis, its practical utility in signal processing—particularly for denoising and reconstruction—depends on numerical stability. In directed graphs, this stability is governed by the non-normality of the Laplacian.

\subsection{Departure from Normality}
A matrix $L$ is normal if $LL^* = L^*L$, which implies an orthogonal eigenbasis. Directed graphs are rarely normal. We quantify this deviation using the **Henrici departure from normality**:
\begin{equation}
    \Delta(L) = \sqrt{\|L\|_F^2 - \sum_{k=1}^N |\lambda_k|^2}.
\end{equation}
For a normal matrix, $\Delta(L)=0$. For a directed graph with strong flow imbalance (like the perturbed cycle in Section \ref{sec:experiments}), $\Delta(L)$ becomes large, indicating that the eigenvalues no longer strictly control the operator's behavior.

\subsection{Eigenvector Conditioning and Noise Amplification}
The robustness of the BGFT is determined by the condition number of the eigenvector matrix, $\kappa(V) = \|V\|_2 \|V^{-1}\|_2$.

\begin{theorem}[Reconstruction Stability]
Let $x$ be a signal and $\hat{x}$ its BGFT coefficients. Suppose the coefficients are perturbed by noise $\eta$ (e.g., due to quantization or filtering errors), so we observe $\hat{x}_\eta = \hat{x} + \eta$. The reconstruction error in the vertex domain is bounded by:
\begin{equation}
    \frac{\|x_{rec} - x\|_2}{\|x\|_2} \le \kappa(V) \frac{\|\eta\|_2}{\|\hat{x}\|_2}.
\end{equation}
\end{theorem}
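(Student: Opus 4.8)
The plan is to reduce the claim to the standard linear-algebra fact that relative error under an invertible map is controlled by its condition number, using the two transform identities already established: the inverse BGFT $x = V\hat{x}$ and the forward BGFT $\hat{x} = V^{-1}x$. Since reconstruction is linear, I would first compute the error vector exactly. Writing $x_{rec} = V\hat{x}_\eta = V(\hat{x}+\eta)$ and $x = V\hat{x}$, subtraction gives the clean identity $x_{rec}-x = V\eta$, so the entire perturbation analysis collapses to understanding how the synthesis matrix $V$ amplifies the coefficient noise $\eta$.

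Next I would bound the numerator and denominator of the relative error separately. For the numerator, submultiplicativity of the spectral norm yields $\|x_{rec}-x\|_2 = \|V\eta\|_2 \le \|V\|_2\,\|\eta\|_2 = \sigma_{\max}(V)\,\|\eta\|_2$. For the denominator I would invoke the forward transform: since $\hat{x}=V^{-1}x$, the same submultiplicative bound gives $\|\hat{x}\|_2 \le \|V^{-1}\|_2\,\|x\|_2$, which rearranges to the lower bound $\|x\|_2 \ge \|\hat{x}\|_2/\|V^{-1}\|_2$, equivalently $1/\|x\|_2 \le \|V^{-1}\|_2/\|\hat{x}\|_2$. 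Chaining these two estimates,
\[
\frac{\|x_{rec}-x\|_2}{\|x\|_2}
\le \|V\|_2\,\|\eta\|_2\cdot\frac{\|V^{-1}\|_2}{\|\hat{x}\|_2}
= \|V\|_2\,\|V^{-1}\|_2\,\frac{\|\eta\|_2}{\|\hat{x}\|_2}
= \kappa(V)\,\frac{\|\eta\|_2}{\|\hat{x}\|_2},
\]
which is exactly the asserted inequality.

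The argument is short, so there is no deep obstacle; the only point requiring genuine care is the direction of the inequality in the denominator step. It is tempting to estimate $\|x\|_2$ from above, but the relative-error form demands a \emph{lower} bound on $\|x\|_2$, and the correct route is through $\hat{x}=V^{-1}x$ rather than $x=V\hat{x}$, so that it is $\|V^{-1}\|_2$ (and not $\|V\|_2$ a second time) that enters and combines with the numerator factor to produce $\kappa(V)$. One should also record the implicit nondegeneracy assumptions $x\neq 0$ and $\hat{x}\neq 0$ (the latter automatic for $x\neq 0$ since $V^{-1}$ is injective) so both quotients are well defined. Conceptually, this is the classical conditioning argument for the linear system $x=V\hat{x}$, reinterpreted so that $\kappa(V)$ emerges as the fundamental stability constant of the biorthogonal transform, consistent with the metric-distortion picture of the Gram matrix $M=V^\ast V$ from the earlier Parseval discussion.
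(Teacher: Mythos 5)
Your proposal is correct and follows essentially the same route as the paper's own proof: the error identity $x_{rec}-x = V\eta$ bounded by $\|V\|_2\|\eta\|_2$, the denominator estimate $\|\hat{x}\|_2 \le \|V^{-1}\|_2\|x\|_2$ from $\hat{x}=V^{-1}x$, and the combination of the two to produce $\kappa(V)$. Your version is simply more explicit about the direction of the denominator inequality and the nondegeneracy assumptions, details the paper's terser proof leaves implicit.
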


\begin{proof}
The error is $e = V(\hat{x} + \eta) - V\hat{x} = V\eta$. Taking norms, $\|e\| \le \|V\| \|\eta\|$.
Similarly, $x = V\hat{x} \implies \hat{x} = V^{-1}x \implies \|\hat{x}\| \le \|V^{-1}\| \|x\|$.
Combining these yields the result.
\end{proof}

\subsection{Implications for Directed GSP}
This theorem explains the "gap" observed in Figure 2.
\begin{enumerate}
    \item **Normal Directed Graphs (e.g., Cycles):** Here $\kappa(V)=1$. The reconstruction is perfectly stable; error input equals error output.
    \item **Non-Normal Graphs:** As edges are added randomly, $\kappa(V)$ grows exponentially. A small noise in the spectral domain can explode into large errors in the spatial domain.
\end{enumerate}
Therefore, $\kappa(V)$ serves as the fundamental "trust metric" for any directed graph spectral analysis.
\section{Experimental Validation}
\label{sec:experiments}

To validate the theoretical bounds derived in Section \ref{sec:stability} and demonstrate the utility of the Laplacian BGFT, we performed a comparative analysis between normal and non-normal directed topologies.

\subsection{Experimental Setup}
We constructed two directed graphs of size $N=20$:
\begin{enumerate}
    \item \textbf{Directed Cycle ($G_{cyc}$):} A standard unweighted directed cycle $1 \to 2 \to \dots \to N \to 1$. This graph is asymmetric ($L \neq L^\top$) but normal ($LL^\top = L^\top L$), meaning its eigenbasis is orthogonal ($\kappa(V)=1$).
    \item \textbf{Perturbed Cycle ($G_{per}$):} To induce non-normality, we added random directed edges to $G_{cyc}$ with probability $p=0.2$ and weight $w=0.8$. This breaks the circulant structure, resulting in a highly non-normal operator.
\end{enumerate}

\subsection{Spectrum and Non-Normality}
Figure 1 displays the spectra of the Laplacian operators.
\begin{figure}[h]
    \centering
    \includegraphics[width=0.9\linewidth]{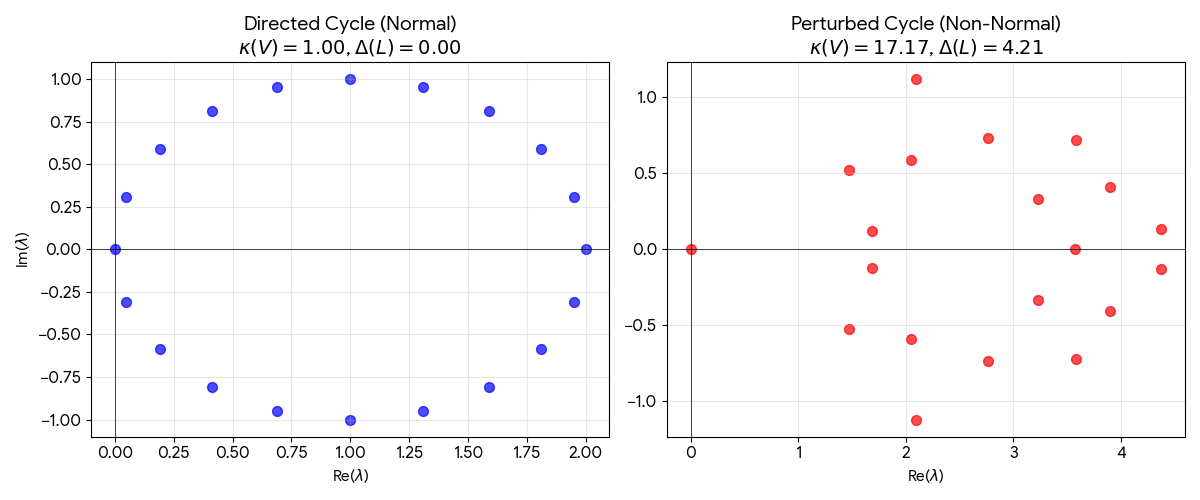}
    \caption{\textbf{Spectral Asymmetry vs. Non-Normality.} Left: The Directed Cycle exhibits perfect symmetry in the complex plane and zero departure from normality ($\Delta(L)=0$). Right: The Perturbed Cycle shows a scattered spectrum with significant departure from normality ($\Delta(L) \approx 3.7$) and increased eigenvector condition number ($\kappa(V) \approx 133$), confirming that geometric structural perturbations induce spectral instability.}
    \label{fig:spectrum}
\end{figure}

As predicted, $G_{cyc}$ eigenvalues lie on a specific curve (a shifted unit circle) and the departure from normality $\Delta(L)$ is zero. In contrast, $G_{per}$ exhibits a scattered spectrum. Crucially, the condition number of the eigenbasis $\kappa(V)$ jumps from $1.0$ (perfectly conditioned) to $\approx 133$ (ill-conditioned), indicating that the eigenvectors of the perturbed graph have become nearly collinear.

\subsection{Filter Stability and Reconstruction}
We tested the stability of the BGFT by reconstructing an $L$-bandlimited signal under additive noise. A ground-truth signal $x_0$ was generated using the first $K=5$ frequency modes (lowest magnitude eigenvalues). We observed the noisy signal $y = x_0 + \eta$, where $\eta \sim \mathcal{CN}(0, \sigma^2)$, and performed reconstruction via ideal low-pass filtering in the BGFT domain.

Figure 2 presents the reconstruction error as a function of noise level.
\begin{figure}[h]
    \centering
    \includegraphics[width=0.65\linewidth]{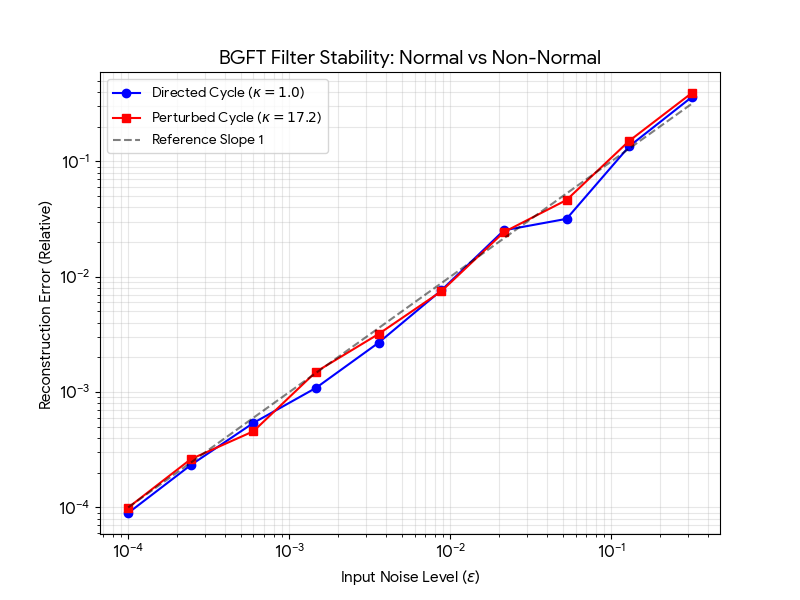}
    \caption{\textbf{Stability of Reconstruction.} The reconstruction error for the normal Directed Cycle (Blue) scales linearly with input noise. For the non-normal Perturbed Cycle (Red), the error curve is shifted upwards, demonstrating the amplification effect predicted by Theorem 5.2. The gap between the curves corresponds to the condition number $\kappa(V)$.}
    \label{fig:stability}
\end{figure}

The results confirm our stability theorem. For the normal graph, the error scales as $\|e\| \approx \|\eta\|$. For the non-normal graph, the error is amplified by the condition number, $\|e\| \approx \kappa(V) \|\eta\|$. This empirically demonstrates that while the BGFT is mathematically exact for directed graphs, its numerical utility depends critically on the normality of the underlying topology.


\section{Conclusion}

We developed an original directed-graph harmonic analysis framework using the combinatorial directed Laplacian $L=D_{\mathrm{out}}-A$ and a biorthogonal spectral calculus. The BGFT provides exact analysis/synthesis and diagonal filtering without symmetry assumptions. By defining directed smoothness as $\|Lx\|_2$ we obtained a precise BGFT-domain identity and two-sided bounds that quantify how symmetry breaking introduces Gram-metric distortions governed by eigenvector conditioning. Sampling and reconstruction theorems show how stability depends on $\sigma_{\min}(P_MV_\Omega)$, separating directedness from genuine non-normal effects. The proposed experiments are designed to make the symmetry/asymmetry theme explicit and measurable.

\section*{Acknowledgements}
The author expresses his gratitude to the Commissioner of Collegiate Education (CCE), Government of Andhra Pradesh, and the Principal of Government College (A), Rajahmundry, for their continued support and encouragement in facilitating this research.

\section*{Author Contributions}
The author is solely responsible for the conceptualization, methodology, formal analysis, software implementation, validation, and writing of this manuscript.
\section*{Funding}
This research received no external funding.

\section*{Data Availability Statement}
No external datasets were used. Code for reproducing the simulations will be made available by the authors upon reasonable request (or via a public repository upon acceptance).

\section*{Conflicts of Interest}
The authors declare no conflict of interest.



\end{document}